\newtheorem{thm}{Theorem}
\newtheorem{cor}[thm]{Corollary}
\newtheorem{lem}[thm]{Lemma}
\newtheorem{pro}[thm]{Proposition}
\newtheorem{defn}[thm]{Definition}
\newcommand{\ri}{\rightarrow }
\DeclareMathOperator{\Aut}{Aut} \DeclareMathOperator{\Ext}{Ext}
\DeclareMathOperator{\Ker}{Ker}\DeclareMathOperator{\Coker}{Coker}
\begin{document}

\input diagxy

\centerline{\Large\bf Co-prolongations of a group extension }

\vspace{0.5cm}

\centerline{\bf Nguyen Tien Quang, Doan Trong Tuyen, Nguyen Thi Thu Thuy}

\begin{abstract}  The aim of this paper is to study co-prolongations
of central extensions. We construct the obstruction theory for
co-prolongations
 and classify  the equivalence classes of these by   kernels of a
 homomorphisms
 between  2-dimensional cohomology groups of groups.
\end{abstract}

\vskip 0.2 true cm



\noindent{\small{\bf 2010 Mathematics Subject Classification:} 20J05, 20J06}\\
{\small{\bf Keywords:} group extension, cohomology of groups, prolongation, obstruction}

\section{\bf Introduction}
\vskip 0.3 true cm

A description of group extensions by means of factor sets leads to a
close relationship between the group extension problem   and the
group cohomology theory \cite{B, M1963, W}. Analogously,  the
extension problems of a type of algebras (such as rings, $\bf
k$-split algebras) were dealt with by  appropriate cohomology
theories (such as Mac Lane cohomology, Hochschild cohomology) (see
\cite{M1958, M1963}).  Quang et al used the group cohomology to
study the prolongation of central extensions in \cite{QPC}. In this
paper we consider the dual of that problem.

Let $G$ be a group and $A$ be a $G$-module.  Consider a short exact
sequences of groups
\begin{align*} 
\begin{diagram}
\xymatrix{ E: 0 \ar[r]& A  \ar[r]^{ \alpha }& B\ar[r]^{\beta }&
G\ar[r] &1,}
\end{diagram}
\end{align*}
 in which the left action by $G=B/A$ on $A$ induced by
$B$-conjugation on the commutative normal subgroup $A$ is the given
$G$-module structure on $A$. We say that $B$ is an {\it extension}
of the group $A$ by the group $G$. A {\it morphism} between two
extensions is a triple of homomorphisms $(\alpha, \beta, \gamma)$
 such that the following diagram commutes

\begin{align}  \begin{diagram}
\xymatrix{ E_0:\;\;\;0 \ar[r]& A_0 \ar[d]^\alpha \ar[r]^{i_0}& B_0 \ar[d]^\beta \ar[r]^{p_0}& G_0 \ar[d]^\gamma \ar[r] &1   \\
E:\;\;\;0 \ar[r]& A \ar[r]^{i}& B \ar[r]^{p}& G \ar[r] &1.}
\end{diagram}\label{eq1}
\end{align}
\indent For a given extension  $E$ and a homomorphism $\gamma$,
there exists an extension  $E_0$ and a morphism $(id_A,\beta,
\gamma)$ (then, $B$ is the fibred product, or the pull-back
$A\times_{G_0}G$) (see \cite{H1970} Ch. IV, \cite{P}). This shows
that $\Ext(G,A)$ is a contravariant functor   in terms of the first
variable, $G$, (\cite{M1963} Ch. III).
Given two extensions $E_0, E$ and two morphisms $\alpha, \gamma$,
Proposition 5.1.1 \cite{W} indicates a necessary and sufficient
condition for there to exist a homomorphism   $\beta$ such that
$(\alpha, \beta, \gamma)$ is a morphism.

Given an extension  $E_0$  and a homomorphism $\gamma:G_0\ri G$, the
problem here is that of finding whether there is any corresponding
extension $E$ such that $E_0=E\gamma$. A particular case when $E_0$
is a central extension and $\gamma:G_0\ri G$ is a normal
monomorphism is studied in \cite{QPC}. In this case $E$ is said to
be a \emph{prolongation} of   the extension $E_0$.
 The
obstruction theory and the classification of prolongations of $E_0$
were dealt with in the case of $\alpha$ is surjective.

The objective of this paper is to solve the problem of prolongations
in the case  $\gamma$ is surjective and $\alpha$ is an identity.
Then, $E$ is termed a \emph{co-prolongation} of   the extension
$E_0$. In Section 2 we show some necessary conditions for the
existence of co-prolongations. We also prove that each such
co-prolongation is just a central extension and it induces a crossed
module. After stating the problem of $\gamma$-{\it co-prolongations}
of a group extension, we construct the obstruction theory for this
concept (Theorem \ref{th1}) in Section 3, and then classify
co-prolongations (Theorem \ref{dlpl}) of the extension $E_0$ by the
kernel of the inducing homomorphism
$$\overline{\gamma}: H^2(G,A)\ri H^2(G_0,A).$$


\section{\bf {\bf \em{\bf Co-prolongations of a group extension}}}
\vskip 0.3 true cm

In this paper  we fix the {\it system} $(E_0, \gamma)$, where $E_0$
is the extension
$$0\rightarrow A\stackrel{i_0}{\rightarrow}B_0\stackrel{p_0}{\rightarrow}G_0\rightarrow 0,$$
and  $\gamma:G_0\ri G$ is injective. Further,  for short, we identify
the abelian group $A$ with the normal subgroup  $i_0(A)$ of $B_0$; 
the operations in $B_0, G_0$ are denoted by the addition, even
though they are are non-necessarily abelian.

Now, suppose that $\alpha=id_A$ in the diagram \eqref{eq1}.
 Then, the extension $E$
is call a {\it co-prolongation} by $\gamma$ (or $\gamma$-{\it
co-prolongation}) of the extension $E_0$.

{\it Remark.} \label{nx1} In the commutative diagram  \eqref{eq1},
since $\alpha=id_A$ and  $\gamma$  is surjective, $\beta$ also is an
{\it surjection}.

\begin{pro}\label{nx2} If the extension $E$ is a
 $\gamma$-{\it co-prolongation}
 of the extension $E_0\in Opext(G_0,A,\varphi_0)$, then there uniquely
 exists a homomorphism
$\varphi : G \rightarrow \Aut A$ such that the following diagram
commutes:
\begin{align}  \begin{diagram}
\xymatrix{ G_0 \ar[dr]_{\varphi_0} \ar[rr]^{\gamma}& & G, \ar[dl]^{\varphi}  &   \\
& \Aut A &}
\end{diagram}
\begin{diagram} \varphi \gamma = \varphi_0. \end{diagram}\label{eq3}
  \end{align}
  \end{pro}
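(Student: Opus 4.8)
The plan is to produce $\varphi$ as the $G$-module structure that the bottom extension $E$ itself induces on $A$, and then to read the required compatibility $\varphi\gamma=\varphi_0$ off the commutativity of \eqref{eq1}. Concretely, for $x\in G$ I would choose any $u\in B$ with $p(u)=x$ and set $\varphi(x)(a)=u+i(a)-u$ (conjugation in $B$, written additively). This is the standard construction of the module structure attached to an extension; it is well defined because $A$ is abelian, so replacing $u$ by another lift $u+i(a')$ leaves $u+i(a)-u$ unchanged, and it is then routinely a homomorphism $G\to\Aut A$. So exhibiting a candidate $\varphi$ is free; the content is that this particular $\varphi$ satisfies $\varphi\gamma=\varphi_0$.

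For the compatibility, fix $x_0\in G_0$ and choose $u_0\in B_0$ with $p_0(u_0)=x_0$. Commutativity of the right-hand square of \eqref{eq1} gives $p(\beta(u_0))=\gamma(p_0(u_0))=\gamma(x_0)$, so $\beta(u_0)$ is an admissible lift of $\gamma(x_0)$ and may be used to compute $\varphi(\gamma(x_0))$. Using that $\beta$ is a homomorphism together with the left-hand square $\beta i_0=i$ (which is exactly the hypothesis $\alpha=id_A$), I would compute
\begin{align*}
\varphi(\gamma(x_0))(a)=\beta(u_0)+i(a)-\beta(u_0)=\beta\bigl(u_0+i_0(a)-u_0\bigr)=\beta\bigl(i_0(\varphi_0(x_0)(a))\bigr)=i\bigl(\varphi_0(x_0)(a)\bigr),
\end{align*}
where the third equality is the definition of $\varphi_0$ through conjugation in $B_0$. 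Under the identification of $A$ with $i_0(A)$ (and of $i(A)$ with $A$) this reads $\varphi(\gamma(x_0))=\varphi_0(x_0)$, i.e.\ $\varphi\gamma=\varphi_0$.

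Finally, uniqueness follows from the surjectivity of $\gamma$ recorded in the Remark: if $\psi$ and $\varphi$ are homomorphisms $G\to\Aut A$ with $\psi\gamma=\varphi_0=\varphi\gamma$, then they agree on $\gamma(G_0)=G$, whence $\psi=\varphi$.

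I expect the only real care to lie in the middle paragraph: keeping the two identifications $A\cong i_0(A)$ and $A\cong i(A)$ straight while manipulating the additively-written but possibly non-abelian group laws of $B_0$ and $B$, and ensuring each conjugation is evaluated with an admissible lift. It is worth stressing that existence here is not a formality: for a homomorphism $\varphi$ with $\varphi\gamma=\varphi_0$ to exist one needs $\varphi_0$ to be trivial on $\ker\gamma$, and it is precisely the co-prolongation $E$, through the compatible surjection $\beta$, that forces this. The displayed computation therefore does double duty, both constructing $\varphi$ and certifying that $\varphi_0$ is constant on the fibres of $\gamma$.
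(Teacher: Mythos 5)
Your proposal is correct and follows essentially the same route as the paper: $\varphi$ is defined as the module structure induced by the extension $E$ via conjugation through a lift (the paper's formula $i[(\varphi pb)a]=b+ia-b$), the identity $\varphi\gamma=\varphi_0$ is verified using the commutativity of both squares of \eqref{eq1} (a check the paper leaves to the reader but you carry out explicitly), and uniqueness follows from the surjectivity of $\gamma$ exactly as in the paper.
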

   \begin{proof}
The homomorphism $\varphi : G \rightarrow \Aut A$ induced by the
extension $E$ is determined by
\begin{align}
i[(\varphi pb)a]= b+ ia-b,\;\;b\in B, a\in A.\label{eq2}
 \end{align}
 It is easy to check that  $\varphi$ satisfies \eqref{eq3}.
If $\varphi' : G \rightarrow \Aut(A)$ is a homomorphism satisfying
  \eqref{eq3}, then $\varphi'=\varphi$ since $\gamma$ is surjective. 
\end{proof}

\begin{pro}\label{bode1}  If the extension $E$ is a
 $\gamma$-{\it co-prolongation}
 of the extension $E_0$, there exists an isomorphism
 $j:\Ker \gamma\ri \Ker \beta$
 such that $p_0j=id_{\Ker\gamma}$.
\end{pro}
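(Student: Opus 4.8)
The plan is to produce $j$ as the inverse of a single, naturally occurring map: I claim that $p_0$ restricts to an isomorphism $\Ker\beta \to \Ker\gamma$, and I will set $j = (p_0|_{\Ker\beta})^{-1}$, which immediately gives $p_0 j = id_{\Ker\gamma}$. The only inputs I would use are the two commutativity relations furnished by the morphism $(id_A, \beta, \gamma)$ of diagram \eqref{eq1}, namely $\beta i_0 = i$ (because $\alpha = id_A$, so $A_0 = A$) and $p\beta = \gamma p_0$, together with the exactness of both rows; in particular I never need $\gamma$ to be surjective.

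First I would check that $p_0$ sends $\Ker\beta$ into $\Ker\gamma$: if $\beta(b) = 0$ then $\gamma p_0(b) = p\beta(b) = 0$, so $p_0(b) \in \Ker\gamma$. Next I would verify that $p_0|_{\Ker\beta}$ is injective. If $b \in \Ker\beta$ with $p_0(b) = 0$, then exactness of $E_0$ gives $b = i_0(a)$ for some $a \in A$, whence $0 = \beta(b) = \beta i_0(a) = i(a)$; since $i$ is injective, $a = 0$ and so $b = 0$.

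The step that will take the most care is surjectivity of $p_0|_{\Ker\beta}$ onto $\Ker\gamma$. Given $g \in \Ker\gamma$, surjectivity of $p_0$ yields some $b \in B_0$ with $p_0(b) = g$; then $p\beta(b) = \gamma p_0(b) = \gamma(g) = 0$, so $\beta(b) \in \Ker p = i(A)$ by exactness of $E$, and I may write $\beta(b) = i(a)$. Using $\beta i_0 = i$, I would correct $b$ to $b' = b - i_0(a)$, for which $\beta(b') = \beta(b) - \beta i_0(a) = i(a) - i(a) = 0$ while $p_0(b') = p_0(b) - p_0 i_0(a) = g$ because $p_0 i_0 = 0$. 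Hence $b' \in \Ker\beta$ maps onto $g$. The only subtlety here is that $B_0$ need not be abelian, so I would phrase the correction $b' = b - i_0(a)$ as multiplication by the inverse of $i_0(a)$ on the correct side and confirm the homomorphism computations go through; since both $\beta(b)$ and $\beta i_0(a)$ equal the same element $i(a)$, the cancellation is harmless in the non-commutative setting.

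Having shown $p_0|_{\Ker\beta}\colon \Ker\beta \to \Ker\gamma$ is a bijective group homomorphism, hence an isomorphism, I would conclude by defining $j = (p_0|_{\Ker\beta})^{-1}$, which is an isomorphism $\Ker\gamma \to \Ker\beta$ satisfying $p_0 j = id_{\Ker\gamma}$, as required.
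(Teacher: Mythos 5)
Your proposal is correct and follows essentially the same route as the paper: both show that $p_0$ restricts to a bijection $\Ker\beta\to\Ker\gamma$ (injectivity via $\Ker p_0\cap\Ker\beta=0$, surjectivity by correcting a preimage $b$ of $g\in\Ker\gamma$ by the element $a$ with $\beta(b)=i(a)$) and then take $j$ to be the inverse of that restriction. Your added remarks on the non-abelian cancellation and on not needing surjectivity of $\gamma$ are accurate but do not change the argument.
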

\begin{proof}
Since the right hand side square of the diagram (\ref{eq1})
commutes,
\begin{equation*} p_0(\Ker \beta)\subset \Ker\gamma. \end{equation*}
Thus, the homomorphism $p_0$ induces a homomorphism
$p^{\bullet}:\Ker\beta \ri \Ker\gamma$.
 We show that $p^{\bullet}$ is surjective. Take $c\in
\Ker\gamma,$ then $c=p_0(x_0)$, where $x_0\in B_0.$ Then,
\begin{equation*} 0=\gamma(c)=\gamma p_0(x_0)=p\beta(x_0). \end{equation*}
It follows that $\beta(x_0)\in \Ker p= A$. Set $a=\beta(x_0)$. Since
the left hand side square commutes,   $\beta(a)=a=\beta(x_0)$, hence
$x_0-a\in \Ker \beta$. One obtains
\begin{equation*} c=p_0(x_0)=p_0(x_0-a)\in p_0(\Ker \beta), \end{equation*}
hence  $p^\bullet$ is surjective. Also, $A\cap \Ker \beta=0$, or
$\Ker(p_0)\cap \Ker \beta=0$, which implies that $p^{\bullet}$ is
injective. Then, $j=(p^{\bullet})^{-1}:\Ker\gamma \ri \Ker\beta$ is
the required isomorphism.
\end{proof}

A monomorphism $j:C\ri D$ is said to be {\it normal} if $jC$ is a
normal subgroup in $D$.

\begin{lem}\label{md1}
If there exists a  normal monomorphism   $j:\Ker\gamma\ri B_0$  such
that $p_0j=id_{\Ker\gamma}$, then
 $p_0^{-1}(\Ker\gamma)=A\times j(\Ker\gamma)$ and the following diagram commutes
\begin{align} \label{cr} \begin{diagram}
\xymatrix{ E': 0 \ar[r]& A \ar@{=}[d] \ar[r]^{i'}& A\times \Ker\gamma
\ar[d]^ \varepsilon \ar[r]^{p'}& \Ker\gamma \ar[d]^\nu \ar[r] &0   \\
 E_0:0 \ar[r]& A \ar[r]^{i}& B_0 \ar[r]^{p}& G_0 \ar[r] &0,}
\end{diagram}
\end{align}
where $A\times \Ker\gamma$ is the direct product, $\nu$ is an
inclusion, $i': a\mapsto (a,0)$,  $p': (a,c)\mapsto c$, and
$\varepsilon : (a,c)\mapsto a+j(c).$
\end{lem}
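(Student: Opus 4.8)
The plan is to establish the two assertions in turn: first the internal direct-product decomposition of the subgroup $p_0^{-1}(\Ker\gamma)$, and then the commutativity of the diagram \eqref{cr}, which will follow almost formally once the decomposition is in hand. I would begin by recording the two evident inclusions $A\subseteq p_0^{-1}(\Ker\gamma)$ (since $A=\Ker p_0$) and $j(\Ker\gamma)\subseteq p_0^{-1}(\Ker\gamma)$ (since $p_0j=id_{\Ker\gamma}$ gives $p_0(j(c))=c\in\Ker\gamma$). The reverse inclusion $p_0^{-1}(\Ker\gamma)\subseteq A+j(\Ker\gamma)$ is the essential computation: given $x$ with $p_0(x)=c\in\Ker\gamma$, the element $x-j(c)$ lies in $\Ker p_0=A$ because $p_0(x-j(c))=c-c=0$, whence $x=a+j(c)$ with $a=x-j(c)\in A$. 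This yields the set-level equality $p_0^{-1}(\Ker\gamma)=A+j(\Ker\gamma)$.

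Next I would verify that this sum is direct. The intersection $A\cap j(\Ker\gamma)$ is trivial, for if $j(c)\in A=\Ker p_0$ then $c=p_0(j(c))=0$ and hence $j(c)=0$. Combining this with the normality of $A$ in $B_0$ (as $\Ker p_0$) and the normality of $j(\Ker\gamma)$ in $B_0$ (which is exactly the hypothesis that $j$ is a normal monomorphism), the standard fact about two normal subgroups with trivial intersection shows that $A$ and $j(\Ker\gamma)$ commute elementwise and that $p_0^{-1}(\Ker\gamma)$ is their internal direct product. Consequently $\varepsilon:A\times\Ker\gamma\ri B_0$, $(a,c)\mapsto a+j(c)$, is a well-defined injective homomorphism with image precisely $p_0^{-1}(\Ker\gamma)$. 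I would stress that the normality of $j(\Ker\gamma)$ is indispensable here: it is what forces the two factors to commute and therefore what makes $\varepsilon$ multiplicative rather than merely a bijection of underlying sets; this is the one step I expect to require care, the passage from the set equality to the genuine group-theoretic direct product.

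Finally, with $\varepsilon$ established as a homomorphism, the commutativity of \eqref{cr} is a direct check. The top row $E'$ is the split exact sequence associated with the direct product, so it is exact with $i'(a)=(a,0)$ and $p'(a,c)=c$. For the left square, $\varepsilon i'(a)=\varepsilon(a,0)=a+j(0)=a=i(a)$, so it commutes over $id_A$. For the right square, $p_0\varepsilon(a,c)=p_0(a)+p_0(j(c))=c=\nu p'(a,c)$, where $\nu$ is the inclusion $\Ker\gamma\hookrightarrow G_0$. Everything beyond the direct-product step is routine diagram chasing, and I would present it as such.
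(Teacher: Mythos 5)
Your proof is correct and follows essentially the same route as the paper's: produce the decomposition $b=a+j(c)$ by subtracting $j(p_0(b))$, check $A\cap j(\Ker\gamma)=0$, and then verify the diagram. The only difference is that you spell out why $\varepsilon$ is a homomorphism (two normal subgroups with trivial intersection commute elementwise), a point the paper merely asserts; this is a welcome addition, not a divergence.
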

\begin{proof}
Let $b\in p_0^{-1}(\Ker\gamma)$. Then there exists $c\in \Ker\gamma$
such that $p_0(b)=c=p_0j(c)$. It follows that $b-j(c)=a\in \Ker p_0
= A$. Thus, $b=a+j(c)\in A + j(\Ker\gamma)$. It is easy to see that
$A \cap j(\Ker\gamma)=0$, hence $p_0^{-1}(\Ker\gamma)=A\times
j(\Ker\gamma).$ The map $\alpha: (a,c)\mapsto a+j(c)$ is a
homomorphism and diagram (\ref{cr}) commutes.
\end{proof}

\begin{defn} A {\it crossed module} is a quadruple $\mathcal
M=(B,D,d,\theta)$ in which
  $d:B\ri D,\;\theta:D\ri$ Aut$B$ are group homomorphisms such that\\
\indent $C_1.\ \theta d=\mu$,\\
\indent $C_2.\ d(\theta_x(b))=\mu_x(d(b)),\ x\in D, b\in B$,\\
where $\mu_x$ is is an inner  automorphism given by conjugation with
$x$.
\end{defn}

A crossed module $(B,D,d,\theta)$ is sometimes denoted by
$B\stackrel{d}{\rightarrow}D$, or $B\rightarrow D$.

Crossed modules over groups are introduced by   Whitehead \cite{wh}
(see also \cite{B} Ch. IV, \cite{ML2}). The problem of group
extensions of the type of a crossed module presented in  \cite{BM}.
This closely relates to the problem of prolongations (see
\cite{QPC}). Now, we show that each co-prolongation of a group
extension determines a crossed module.
\begin{pro}
If there exists a  $\gamma$-co-prolongation of $E_0$, then:\\
\indent 1.\ $E_0$ is a central extension,\\
\indent 2.\  $E_0$ induces a homomorphism $\theta: G_0\ri
\Aut(A\times \Ker\gamma)$ such that $(A\times \Ker\gamma, G_0,\nu
p', \theta)$ is a crossed module.
\end{pro}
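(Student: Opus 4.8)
The plan is to extract from the co-prolongation the normal subgroup $N:=p_0^{-1}(\Ker\gamma)$ of $B_0$ together with its internal direct product decomposition, and to read off both conclusions from the way $A$ sits inside $N$. Concretely, Proposition \ref{bode1} produces an isomorphism $j:\Ker\gamma\ri\Ker\beta$ with $p_0 j=\mathrm{id}$; since $\Ker\beta$ is a kernel it is normal in $B_0$, so the composite $j:\Ker\gamma\ri B_0$ is a normal monomorphism and Lemma \ref{md1} applies, giving $N=A\times j(\Ker\gamma)$ with $j(\Ker\gamma)=\Ker\beta$. The two facts I would carry forward are that $A$ is abelian and that, inside the direct product $N$, the subgroups $A$ and $j(\Ker\gamma)$ commute elementwise.

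For part 2 I would define $\theta$ as the conjugation action of $B_0$ on its normal subgroup $N=A\times\Ker\gamma$. The point is that this descends to $G_0$: an element $a\in A$ acts trivially on $N$ by conjugation---trivially on $A$ because $A$ is abelian, and trivially on $j(\Ker\gamma)$ because the two factors commute---so the homomorphism $B_0\ri\Aut N$ kills $A=\Ker p_0$ and factors through $\theta:G_0\ri\Aut(A\times\Ker\gamma)$. With $d=\nu p':(a,c)\mapsto c$, axiom $C_2$ is then immediate: lifting $x\in G_0$ to $\tilde x\in B_0$ and applying $p_0$ to $\tilde x+(a+j(c))-\tilde x$ gives $x+c-x=\mu_x(d(a,c))$, since $p_0(a)=0$ and $p_0 j(c)=c$. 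For $C_1$ I would use $j(c)\in N$ as a lift of $c=d(a,c)$; conjugating $(a',c')$ by $j(c)$ and invoking both the commuting of the factors and the fact that $j$ is a homomorphism yields $a'+j(c+c'-c)$, i.e. $(a',c+c'-c)$, which is exactly the inner automorphism $\mu_{(a,c)}$ of the direct product $A\times\Ker\gamma$ (the $A$-coordinate is unaffected because $A$ is abelian). Both verifications are routine once the descent of $\theta$ is in place.

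For part 1 I would show that the action $\varphi_0:G_0\ri\Aut A$ attached to $E_0$ is trivial. The commuting of $A$ with $j(\Ker\gamma)$ inside $N$ already gives $\varphi_0(c)=\mathrm{id}_A$ for every $c\in\Ker\gamma$, since by the defining formula \eqref{eq2} one has $\varphi_0(c)(a)=j(c)+a-j(c)=a$. By Proposition \ref{nx2} we have $\varphi_0=\varphi\gamma$, so it remains to see that the action $\varphi:G\ri\Aut A$ of the co-prolongation $E$ is itself trivial. This promotion from triviality on $\Ker\gamma$ to triviality on all of $G_0$ is, I expect, the main obstacle: because $N$ acts trivially on $A$, the conjugation action of $B_0$ on $A$ factors through $B_0/N\cong G$ as precisely $\varphi$, so the decomposition alone does not force $\varphi$ to vanish. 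The statement is in fact equivalent to the assertion that the co-prolongation $E$ is a central extension, and it is here that the standing framework---co-prolongations of central extensions---must be brought to bear. Once $\varphi$ is known to be trivial, $\varphi_0=\varphi\gamma$ is trivial and $E_0$ is central, completing the proof.
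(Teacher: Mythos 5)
Your treatment of part 2 is sound and is essentially the paper's own construction: $\theta$ is conjugation by a lift in $B_0$, transported through $\varepsilon$, and it descends to $G_0$ because $A=\Ker p_0$ centralizes $N=A\times j(\Ker\gamma)$; the paper simply cites Proposition 2 of \cite{QPC} for the crossed-module axioms that you verify by hand. Your setup via Proposition \ref{bode1} and Lemma \ref{md1} also matches the paper exactly.

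The genuine gap is the one you flagged yourself, in part 1. Showing $\varphi_0(c)=id_A$ for $c\in\Ker\gamma$ only establishes that $A$ is centralized by $p_0^{-1}(\Ker\gamma)$, and, as you correctly observe, the direct-product decomposition of that subgroup says nothing about how the rest of $B_0$ acts on $A$; your argument stops exactly where the conclusion has to be produced. The paper closes this step differently: it notes that $A\subset Z(A\times\Ker\gamma)$, so the top row $E'$ of diagram \eqref{cr} is a central extension and \eqref{cr} exhibits $E_0$ as a $(id_A,\nu)$-prolongation of $E'$ in the sense of \cite{QPC} (with $\nu$ a normal monomorphism and $id_A$ surjective), and then invokes Theorem 10 of \cite{QPC}, which asserts that such a prolongation is itself a central extension. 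In other words, the missing idea is not an internal computation in $B_0$ but an appeal to the prolongation theory of \cite{QPC}; no direct argument that $\varphi_0$ (equivalently $\varphi$) is trivial on all of $G_0$ appears in the paper either. Your diagnosis that everything reduces to the centrality of the co-prolongation and that the visible data do not obviously force it is accurate, but as written your proof of part 1 is incomplete; to repair it you would have to reproduce, or at least cite, the content of Theorem 10 of \cite{QPC}.
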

\begin{proof}
It follows from Proposition  \ref{bode1} and Lemma \ref{md1} that the
diagram  \eqref{cr} commutes.
In this diagram, since $A\subset Z(A\times \Ker\gamma)$, $ E_0$ is a
 $(id_A,\nu)$-{\it prolongation} of the extension $ E'$ in the sense
 of
\cite{QPC}. According to Theorem 10 \cite{QPC}, $ E_0$ is a central
extension.

2) This follows from  Proposition 2 \cite{QPC}. The homomorphism
$\theta:G_0\ri \Aut(A\times \Ker\gamma)$ is given by
$$\theta_g=\phi_{b_0},\ p_0(b_0)=g_0,\qquad \qquad \qquad $$
 $\qquad \qquad \qquad \qquad \phi_{b_0}(x)=\varepsilon^{-1}\mu_{b_0}(\varepsilon x),\ x\in A\times \Ker\gamma.$
 \end{proof}

\section{\bf {\bf \em{\bf The obstruction of a co-prolongation}}}
\vskip 0.3 true cm

In this section, suppose that the isomorphism  $\varphi:G\ri \Aut A$
of the {\it system} $(E_0,\gamma)$ satisfies \eqref{eq3}. The ``co-prolongation problem'' is that of finding whether
  there is any
extension $E$ of $A$ by $G$ which is a co-prolongation of the
extension $E_0$  and, if so, how many of these exist.

Let  $\left\{u(x_0), x_0\in G_0\right\}$, $u(0)=0$, be a set of
representatives of
  $G_0$ in $B_0$.
This set induces a homomorphism $\varphi_0: G_0\rightarrow \Aut A$
by \eqref{eq2} and a factor set  $f_0:(G_0)^2\ri A$ by
$$f_0(x_0,y_0)=u(x_0)+u(y_0)-u(x_0+y_0).$$
According to \cite{M1963} Ch. IV,
  the function $f_0: G_0\times G_0 \rightarrow A$ is uniquely defined
  in terms of
   $B^2_{\varphi_0}(G_0,A)$, that means the element
\begin{equation*} \overline{f_0}=f_0+B^2_{\varphi _0}(G_0,A)
\end{equation*}
 is completely determined. 
Thanks to the relation \eqref{eq3}, the homomorphism $\gamma:
G_0\rightarrow G$ induces one
\begin{align*} \overline{\gamma}: H^2_\varphi (G,A)&\rightarrow H^2_{\varphi _0}(G_0,A)\\
\overline{h}&\mapsto \overline{\gamma^*h},    \end{align*} where
$(\gamma^* h)(x_0,y_0)=h(\gamma x_0,\gamma y_0), \forall x_0,y_0 \in
G_0$.  Then, the element
\begin{equation*}\widetilde{f_0}=\overline{f_0}+\mathrm{Im} (\overline{\gamma} )\in \Coker (\overline{\gamma}),   \end{equation*}
is not dependent on the choice of the representative $u(x_0)$.
 We call $\widetilde{f_0}$ the \emph{obstruction} of $\gamma$-co-prolongation
  of the extension $E_0$.

\vspace{0.2cm}
To prove Theorem \ref{th1} and Corollary \ref{hq1}, one represents
the factor set $f_0$  with respect to the factor set $s:G^2\ri
\Ker\gamma$ of the extension $G_0\stackrel{\gamma}{\rightarrow} G$.
Under the hypothesis of Lemma \ref{md1}, choose a set of
representatives $\left\{u(x_0), x_0\in G_0\right\}$ in $B_0$ as
follows. Firstly, choose a set of representatives
   $\left\{v(x), x\in G\right\},v(1)=0,$ of  $G$
 in $G_0$ whose the corresponding factor set is $s:G^2\ri \Ker\gamma$.
 For each $x\in G$, choose an element  $u_x$ in $B_0$ such that
\begin{equation*} p_0(u_x)=v(x),\;u_1=0. \label{-6}
\end{equation*}
Since the element $x_0\in G_0$ is uniquely written as
\begin{equation*}
 x_0=c+v(x),\;\;c\in \Ker\gamma,\;\;x\in G,
 \end{equation*}
 we set 
\begin{equation} \label{6}
u(x_0)=jc+u_x.
\end{equation}
Let $f_0$ be the factor set of  $B_0$ corresponding to this factor
set. For $x_0,y_0\in G_0$, one has
\begin{equation*} x_0=c+v(x), \; y_0=d+v(y),\;c,d\in \Ker\gamma,\; x,y\in G. \end{equation*}
It follows that
\begin{align*}x_0+y_0&=(c+v(x))+(d+v(y))\\
&=c+\mu_{v(x)}(d)+v(x)+v(y)\\
&= c+\mu_{v(x)}(d)+s(x,y)+v(xy).  \end{align*}
Then,
\begin{equation} \label{c_0}
c_0=c+\mu_{v(x)}(d)+s(x,y)\in \Ker\gamma,
\end{equation}
hence the relation \eqref{6} implies
\begin{equation*}
 u(x_0+y_0)=jc_0+u_{xy}.
 \end{equation*}
  Simple calculations lead to
 \begin{equation}\label{hnt}
 f_0(x_0,y_0)= jc+\mu_{u_x}(jd)+(u_x+u_y-u_{xy})-jc_0.
 \end{equation}

\begin{thm}\label{th1}
Co-prolongations of $E_0$ exist if and only if $\widetilde{f_0}$
vanishes on $\Coker (\overline{\gamma})$.
\end{thm}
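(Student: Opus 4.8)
The plan is to read the vanishing of $\widetilde{f_0}$ as the assertion $\overline{f_0}\in \mathrm{Im}(\overline{\gamma})$, and to match this on both sides with the existence of a co-prolongation. The conceptual content is the contravariant functoriality recalled in the Introduction: a co-prolongation $E$ of $E_0$ is precisely an extension of $A$ by $G$ whose pullback $E\gamma$ along $\gamma$ is equivalent to $E_0$, and pullback of extensions is realized on cohomology by $\overline{\gamma}$. I therefore expect the proof to amount to translating ``a co-prolongation exists'' into ``$\overline{f_0}$ lies in the image of $\overline{\gamma}$'', in both directions, via the explicit cocycle formula \eqref{hnt}. Throughout I work under the standing hypothesis of Lemma \ref{md1}; by Proposition \ref{bode1} this hypothesis is forced whenever a co-prolongation exists, and under it $E_0$ is central, so $\varphi_0$ (and hence $\varphi$) is trivial on $A$, which collapses the conjugation twists in the cocycle manipulations.

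For necessity, suppose a co-prolongation $E:0\to A\xrightarrow{i}B\xrightarrow{p}G\to 0$ with morphism $(id_A,\beta,\gamma)$ is given. I would fix representatives $\{w(x),x\in G\}$ of $G$ in $B$ with factor set $h$, so that $\overline{h}\in H^2_\varphi(G,A)$. Since $p\beta=\gamma p_0$, the element $\beta(u(x_0))$ lies over $\gamma x_0$, so there is a unique $g(x_0)\in A$ with $\beta(u(x_0))=i(g(x_0))+w(\gamma x_0)$, defining a normalized $1$-cochain $g:G_0\to A$. Pushing the factor-set relation $u(x_0)+u(y_0)=i_0 f_0(x_0,y_0)+u(x_0+y_0)$ through the homomorphism $\beta$ (using $\beta i_0=i$ and \eqref{eq2} to commute $A$ past $w$) and then comparing $A$-components yields the identity
\begin{equation*}
f_0=\gamma^{*}h+\delta g .
\end{equation*}
Hence $\overline{f_0}=\overline{\gamma}(\overline{h})\in \mathrm{Im}(\overline{\gamma})$, i.e. $\widetilde{f_0}=0$ in $\Coker(\overline{\gamma})$.

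For sufficiency I would reverse this. If $\widetilde{f_0}=0$, choose $\overline{h}\in H^2_\varphi(G,A)$ with $\overline{\gamma}(\overline{h})=\overline{f_0}$; concretely there exist a factor set $h$ on $G$ and a $1$-cochain $g:G_0\to A$ with $f_0=\gamma^{*}h+\delta g$. I build $E$ as the crossed product of $A$ and $G$ determined by $\varphi$ and $h$, with the evident $i$ and $p$, and define $\beta$ on the representatives \eqref{6} by $\beta(i_0a+u(x_0))=i(a)+i(g(x_0))+w(\gamma x_0)$, extending by the group law. The computation of the previous paragraph, read backwards, shows that $f_0=\gamma^{*}h+\delta g$ is exactly the condition needed for $\beta$ to respect the group operation; then $(id_A,\beta,\gamma)$ is a morphism and $E$ is the required co-prolongation of $E_0$.

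The main obstacle I anticipate is bookkeeping rather than conceptual: verifying that $\beta$ is a well-defined group homomorphism in the non-abelian groups $B_0,G_0$, where one must track the conjugation $\mu$ and the factor set $s:G^2\to\Ker\gamma$ carefully. This is precisely where the preparatory formula \eqref{hnt} is used, since it exhibits $f_0$ with its $\Ker\gamma$-part (governed by $s$ and the $jc$-terms) separated from the genuinely $G$-level part $u_x+u_y-u_{xy}$, the latter producing the sought factor set $h$ on $G$ and letting me read off $g$. Invoking centrality of $E_0$ (from Proposition \ref{bode1} and Lemma \ref{md1}) trivializes $\varphi_0$ and $\varphi$, so the cocycle identity $f_0=\gamma^{*}h+\delta g$ reduces to a transparent comparison of $A$-components and the verification becomes routine.
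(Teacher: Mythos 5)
Your proposal is correct in substance and shares the paper's central mechanism: in both arguments the co-prolongation is realized as the crossed product $[A,\varphi,f,G]$ with $\beta(a+u(x_0))=(a+\text{correction},\gamma x_0)$, and the identity $f_0=\gamma^{*}h+\delta g$ is exactly the homomorphism condition for $\beta$. The genuine difference is in the necessity direction. The paper first builds the adapted representatives \eqref{6} out of the splitting $j:\Ker\gamma\ri\Ker\beta$ of Proposition \ref{bode1}, derives the decomposition \eqref{hnt}, and then kills the $\Ker\beta$-terms by applying $\beta$, obtaining the \emph{exact} equality $f_0=\gamma^{*}f$ with no coboundary; the cohomological statement then follows because $\widetilde{f_0}$ is independent of the choice of representatives. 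You instead take arbitrary representatives $w(x)$ in $B$, measure the defect $\beta(u(x_0))-w(\gamma x_0)=g(x_0)\in A$, and land directly on $f_0=\gamma^{*}h+\delta g$. Your route is leaner --- it needs neither \eqref{6} nor \eqref{hnt} for Theorem \ref{th1} itself --- whereas the paper's adapted representatives pay off later, in Corollary \ref{hq1}, where \eqref{hnt} is reused.

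Two blemishes in your framing should be repaired, though neither is fatal. First, you declare the hypothesis of Lemma \ref{md1} (existence of the normal splitting $j$) to be in force ``throughout,'' justified by Proposition \ref{bode1}; but that proposition only yields $j$ \emph{after} a co-prolongation is known to exist, so in the sufficiency direction you may not assume it, and your reference to the representatives \eqref{6} there is circular. Fortunately your construction of $\beta$ works verbatim with an arbitrary set of representatives $u(x_0)$ of $G_0$ in $B_0$ (as in the paper's own sufficiency argument), so the fix is simply to drop the appeal to \eqref{6}. Second, and for the same reason, the centrality of $E_0$ (hence triviality of $\varphi_0$ and $\varphi$) is not available in the sufficiency direction; you should carry the twisted coboundary $\delta g(x_0,y_0)=\varphi_0(x_0)g(y_0)-g(x_0+y_0)+g(x_0)$ rather than collapse the conjugation, which costs nothing since the equivalence ``$\beta$ is a homomorphism iff $f_0=\gamma^{*}h+\delta g$'' holds in the twisted setting.
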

\begin{proof}
\emph{Necessary condition.} Let $E$ be a $\gamma$-co-prolongation of $E_0.$ 
Choose in $B_0$ a set of representatives $u(x_0), x_0\in G_0,$ such
that the induced obstruction   $\widetilde{f_0}$ vanishes in
$\Coker(\overline{\gamma} )$.

If $j: \Ker\gamma\rightarrow \Ker \beta$ is the isomorphism
mentioned in   Proposition 
 \ref{bode1}, then $p_0j=id_{\Ker\gamma}$. The set of representatives
  $u(x_0), x_0\in G_0$, chosen by \eqref{6} in $B_0$, gives  a factor
  set $f_0$ satisfying \eqref{hnt}.

Since
\begin{equation*}p\beta (u_x)=\gamma p_0(u_x)=\gamma(v(x))=x,  \end{equation*}
hence  $\{r(x)=\beta(u_x), x\in G\}$ is a set of representatives of
$G$ in $B$
  (clearly, $r(1)=0$).  Let $f$ be a factor set of $B$
  corresponding to this set of representatives,
we prove that
\begin{equation*} f_0=\gamma^* f. \end{equation*}
Since $jc,jc_0,\mu_{u_x}(jd)$ are in $\Ker \beta$, act  $\beta$ on
two sides of the equality  \eqref{hnt} (note that $\beta|_A=id_A$),
one has
\begin{align*}f_0(x_0,y_0)&=\beta u_x+\beta u_y-\beta u_{xy}\\
&=r(x)+r(y)-r(xy)= f(x,y).  \end{align*} It follows that
\begin{equation*}
(\gamma^*f)(x_0,y_0)=f(\gamma x_0,\gamma y_0)=f(x,y)=f_0(x_0,y_0),
 \end{equation*}
that is $\gamma^*f=f_0$, and hence $\widetilde{f}_0$ vanishes in
  $\Coker(\overline{\gamma})$.

 \emph{Sufficient condition.} Let $\widetilde{f_0}=0 \in \text{Coker}(\overline{\gamma})$,
  where $f_0$ is a factor set of $E_0$. There exists $f\in Z^2(G,A)$
  such that
\begin{align*} f_0 = \gamma^\ast(f) + \delta t, \; \delta t \in B^2(G_0,A). \end{align*}
If $\{u(x_0), x_0\in G_0\}$ is a   set of representatives
corresponding to the factor set $f_0$, then one can choose a set of
representatives $u'(x_0) = u(x_0)- t(x)$ so that one obtains a new
factor set
\begin{align*} f'_0(x_0,y_0) = (\gamma^ \ast f)(x_0,y_0). \end{align*}
According to \cite{M1963} Ch. IV, there exists an extension  $E$ of
the crossed product $\overline{B}=[A,\varphi , f, G] $. This is a
$\gamma$-co-prolongation of the extension  $E_0$.
Indeed,  consider the diagram
\begin{align*}  \begin{diagram}
\xymatrix{ E_0:\;\;\;0 \ar[r]& A \ar@{=}[d] \ar[r]^{i_0}& B_0 \ar[d]^\beta \ar[r]^{p_0}& G_0 \ar[d]^\gamma \ar[r] &0   \\
E:\;\;\;0 \ar[r]& A \ar[r]^{i}& \overline{B} \ar[r]^{p}& G \ar[r] &1}
\end{diagram}
\end{align*} where $i: a\mapsto (a,1)$; $p: (a,x)\mapsto x$;  $\beta: a+u'(x_0) \mapsto (a,\gamma x_0).$
Clearly, $\beta$ is a group homomorphism making the above diagram
commute, that means   $E$ is a co-prolongation of  $E_0$.
\end{proof}

\begin{cor}\label{hq1}
Let $(E_0,\gamma)$ and $\varphi:G\ri \Aut A$ satisfy \eqref{eq3}. If
the onto-homomorphism   $\gamma:G_0\ri G$ is split and there is a
 normal monomorphism
   $j:\Ker\gamma\ri B_0$ such that $p_0j=id_{\Ker\gamma}$, then
      $\gamma$-co-prolongations
  of the extension $E_0$ exist.
 \end{cor}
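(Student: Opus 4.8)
The plan is to reduce the whole statement to the criterion of Theorem \ref{th1}: co-prolongations of $E_0$ exist exactly when the obstruction $\widetilde{f_0}$ vanishes in $\Coker(\overline{\gamma})$. So the entire task becomes to exhibit, from the two standing hypotheses, a factor set $f_0$ of $E_0$ lying in $\mathrm{Im}(\overline{\gamma})$, i.e. to produce $g\in Z^2_\varphi(G,A)$ with $\overline{f_0}=\overline{\gamma}(\overline{g})$. The two hypotheses will be used for two different purposes: the splitting of $\gamma$ controls the factor set $s$ of $G_0\ri G$, while the normal monomorphism $j$ activates Lemma \ref{md1}.

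First I would exploit the splitting. Since $\gamma:G_0\ri G$ is a split surjection, the representatives $\{v(x),x\in G\}$ can be chosen to be a homomorphic section, so that the associated factor set $s:G^2\ri\Ker\gamma$ is identically $0$. With this choice I build the representatives $u(x_0)=jc+u_x$ of $G_0$ in $B_0$ exactly as in \eqref{6}, using the given normal monomorphism $j$, and let $f_0$ be the corresponding factor set, which is then governed by \eqref{hnt}.

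Next I would feed $s=0$ into \eqref{hnt}. Two simplifications occur. On one hand, \eqref{c_0} reduces to $c_0=c+\mu_{v(x)}(d)$; combining this with the identity $\mu_{u_x}(jd)=j(\mu_{v(x)}(d))$ (which follows because $j(\Ker\gamma)$ is normal in $B_0$ and $p_0 j=id_{\Ker\gamma}$, so that $p_0$ inverts $j$ on $j(\Ker\gamma)$) and the fact that $j$ is a homomorphism yields $jc_0=jc+\mu_{u_x}(jd)$. On the other hand, $s=0$ forces $p_0(u_x+u_y-u_{xy})=s(x,y)=0$, so that $g(x,y):=u_x+u_y-u_{xy}$ lies in $A=\Ker p_0$. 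Substituting into \eqref{hnt} collapses the three $j$-terms and leaves $f_0(x_0,y_0)=jc_0+g(x,y)-jc_0$.

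The crux, and the step that needs the most care, is to show this conjugate is just $g(x,y)$. This holds because, by Lemma \ref{md1}, both $A$ and $j(\Ker\gamma)$ are normal in $B_0$ and meet only in $0$; two such subgroups commute elementwise, so the element $g(x,y)\in A$ commutes with $jc_0\in j(\Ker\gamma)$, giving $f_0(x_0,y_0)=g(x,y)$. Hence the value of $f_0$ depends only on $x=\gamma x_0$ and $y=\gamma y_0$, i.e. $f_0=\gamma^\ast g$. Since $f_0$ is a $\varphi_0$-cocycle, $\varphi_0=\varphi\gamma$, and $\gamma$ is onto (so $\gamma^\ast$ is injective on cochains), $g$ is forced to be a $\varphi$-cocycle, whence $\overline{f_0}=\overline{\gamma}(\overline{g})\in\mathrm{Im}(\overline{\gamma})$ and $\widetilde{f_0}=0$ in $\Coker(\overline{\gamma})$; Theorem \ref{th1} then delivers the desired co-prolongation. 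I expect the only real obstacle to be the non-abelian bookkeeping of steps 3--4 in $B_0$: verifying $\mu_{u_x}(jd)=j(\mu_{v(x)}(d))$ and the elementwise commuting of $A$ with $j(\Ker\gamma)$, both of which hinge precisely on the normality built into the hypothesis on $j$.
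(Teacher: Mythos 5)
Your proposal is correct and follows essentially the same route as the paper: use the splitting to take $s=0$, build the representatives $u(x_0)=jc+u_x$ as in \eqref{6}, simplify \eqref{hnt} so that $f_0(x_0,y_0)$ depends only on $(\gamma x_0,\gamma y_0)$, and invoke Theorem \ref{th1}. The only (minor, and arguably cleaner) difference is in the conjugation term: you prove $\mu_{u_x}(jd)=j(\mu_{v(x)}(d))$ directly from the normality of $j(\Ker\gamma)$ and $p_0j=id$, whereas the paper deduces $\mu_{u_x}(jd)=jd$ from the direct-product decomposition $G_0=\Ker\gamma\times \mathrm{Im}\,v$; both collapse \eqref{hnt} to the same conjugate $jc_0+g(x,y)-jc_0=g(x,y)$ via the elementwise commutation of the two normal subgroups $A$ and $j(\Ker\gamma)$.
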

\begin{proof}
Since the onto-homomorphism $\gamma:G_0\ri G$  is split, there is a
normal  monomorphism  $v:G\ri G_0$ such that $\gamma v=id_G$.  Then,
$$G_0=\Ker\gamma \times \mathrm{Im} v.$$
We choose a set of representatives $\left\{v(x)\ |\ x\in G\right\}$
of $G$  in $G_0$ respect to $v$. The corresponding factor set in
$G_0$ is $s=0$. Choose a set of representatives $\left\{u(x_0),
x_0\in G_0\right\}$ by \eqref{6} as in the proof of Theorem
\ref{th1}.
Also,
 by \eqref{6}
\begin{equation*} u(x_0+y_0)=j(c+d)+u_{xy}. \end{equation*}
Since $j$ is a  normal monomorphism,
$$u_x+jd-u_x=j(d'),\ d,d'\in \Ker\gamma.$$
 Act $p_0$ on two sides of the above equality, we have  $v(x)+d-v(x)=d'$.
 Since
  $G_0=\Ker\gamma\times \mathrm{Im }v$, $d'=d$, that means $\mu_{u_x}(jd)=jd$.
Since $s=0$, the relation \eqref{c_0} becomes  $c_0=c+d$. Besides,
$$u_x+u_y-u_{xy}= u(v(x))+u(v(y))-u(v(x)v(y))=f_0(v(x),v(y))\in A.$$
Then, it follows from $A\cap j(\Ker\gamma)=0$ that each element of
  $A$ commutes with each element of
$j(\Ker\gamma)$. Thus, equality \eqref{hnt} turns into
\begin{equation}\label{hnt2}
f_0(x_0,y_0)=f_0(v(x),v(y)).
\end{equation}
Now, define a function    $f:G^2\ri A$ by
\begin{equation}\label{dnf}
f(x,y)=f_0(v(x),v(y)),\ x,y\in G.
\end{equation}
The relation $\eqref{eq3}$ and the fact that $f_0\in
Z^3_{\varphi_0}(G_0,A)$ imply $f\in Z^3_\varphi(G,A)$. Clearly,
\begin{equation*}
(\gamma^*f)(x_0,y_0)=f(\gamma x_0,\gamma y_0)=f((x,y)\stackrel{(\ref{dnf})}{=}
f_0(v(x),v(y))\stackrel{(\ref{hnt2})}{=}f_0(x_0,y_0).
 \end{equation*}
Thus, $\widetilde{f}_0=0$ in $\Coker(\overline{\gamma})$, and hence
by Theorem  \ref{th1}, there exist co-prolongations of $E_0$ by
$\gamma$.
\end{proof}

\begin{thm}[Classification theorem] \label{dlpl} If the system
$(E_0,\gamma)$ together with the homomorphism $\varphi:G\ri \Aut A$
satisfying the relation \eqref{eq3} have  $\gamma$-co-prolongations,
then
 the set of equivalence classes of $\gamma$-co-prolongations
 is a  torseur under the group $K=\Ker(\overline{\gamma})$,
 where
\begin{align*} \overline{\gamma}: H^2_\varphi (G,A)\rightarrow H^2_{\varphi_0}(G_0,A)  \end{align*}
is a homomorphism induced by  $\gamma$.
  \end{thm}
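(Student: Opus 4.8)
The plan is to identify the set of equivalence classes of $\gamma$-co-prolongations with a single nonempty coset of $K=\Ker(\overline\gamma)$ in the abelian group $H^2_\varphi(G,A)$, and then to invoke the elementary fact that such a coset is canonically a torseur under $K$. Throughout I identify $Opext(G,A,\varphi)$, the set of equivalence classes of extensions of $A$ by $G$ inducing the action $\varphi$, with $H^2_\varphi(G,A)$ via the factor-set construction of \cite{M1963} Ch.\ IV; under this identification the Baer sum becomes addition of cohomology classes, so $Opext(G,A,\varphi)$ is an abelian group, and for an extension $E$ I write $cl(E)\in H^2_\varphi(G,A)$ for its class. I take two $\gamma$-co-prolongations to be equivalent exactly when they are equivalent as extensions of $A$ by $G$; since the pull-back along $\gamma$ respects equivalence, being a co-prolongation depends only on the class in $Opext(G,A,\varphi)$, so the equivalence classes of $\gamma$-co-prolongations form a well-defined subset of $H^2_\varphi(G,A)$.

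The crucial step is to show that $E$ is a $\gamma$-co-prolongation of $E_0$ if and only if $\overline\gamma(cl(E))=\overline{f_0}$, where $\overline{f_0}=cl(E_0)$. The morphism $(id_A,\beta,\gamma)\colon E_0\ri E$ demanded by the definition is, by the universal property of the pull-back, the same datum as a morphism $(id_A,\beta',id_{G_0})\colon E_0\ri \gamma^{\ast}E$; by the short five lemma any such morphism is an isomorphism of extensions, so $E$ is a co-prolongation precisely when $\gamma^{\ast}E\sim E_0$ in $Opext(G_0,A,\varphi_0)$. Because pulling back along $\gamma$ replaces a factor set $f$ by $\gamma^{\ast}f$, one has $cl(\gamma^{\ast}E)=\overline{\gamma^{\ast}f}=\overline\gamma(cl(E))$, and the asserted equivalence follows. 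In fact the two directions are exactly the content of Theorem \ref{th1}: its necessity part produces the equality $\gamma^{\ast}f=f_0$, giving $\overline\gamma(cl(E))=\overline{f_0}$, while its sufficiency part builds, from any class of the fiber, a crossed product that is a co-prolongation realizing that class.

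It follows that the equivalence classes of $\gamma$-co-prolongations form precisely the fiber $\overline\gamma^{-1}(\overline{f_0})\subseteq H^2_\varphi(G,A)$. By the standing hypothesis this fiber is nonempty, so I may fix one of its elements $\overline{f_1}$; for any $\overline h\in H^2_\varphi(G,A)$ the relation $\overline\gamma(\overline{f_1}+\overline h)=\overline{f_0}+\overline\gamma(\overline h)$ shows that $\overline{f_1}+\overline h$ lies in the fiber iff $\overline h\in K$, so $\overline\gamma^{-1}(\overline{f_0})=\overline{f_1}+K$. Finally I let $K$ act by translation, $\overline k\cdot\overline f=\overline f+\overline k$; this action is well defined and free by the group structure of $H^2_\varphi(G,A)$, and it is transitive because for any two elements $\overline f,\overline f'$ of the fiber the difference $\overline f'-\overline f$ lies in $K$ and carries $\overline f$ to $\overline f'$. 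Hence the set of equivalence classes of $\gamma$-co-prolongations is a torseur under $K=\Ker(\overline\gamma)$. The only substantive point is the equivalence of the second paragraph—the reduction of the co-prolongation condition to membership in the fiber of $\overline\gamma$, which hinges on reading the defining morphism as a pull-back datum and on Theorem \ref{th1}; after that, the torseur structure is the purely formal statement that a nonempty fiber of a group homomorphism is a coset of its kernel.
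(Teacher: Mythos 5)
Your proof is correct, and it rests on the same underlying mechanism as the paper's --- translation of the cohomology class of a co-prolongation by elements of $K$ --- but it is organized along a genuinely different line. The paper stays inside the factor-set/crossed-product formalism: it defines an action $\Lambda$ of $K$ on the set $U$ of classes of co-prolongations by sending the class of $[A,\varphi,f,G]$ to that of $[A,\varphi,f+h,G]$, checks well-definedness, and gets simple transitivity by exhibiting $h=-f_1+f_2$ for two co-prolongations with factor sets $f_1,f_2$, the membership $\overline h\in K$ coming from $\overline{\gamma^{*}f_i}=\overline{f_0}$. You instead first prove the clean characterization that $E$ is a $\gamma$-co-prolongation of $E_0$ if and only if $\overline\gamma$ sends its class to $\overline{f_0}$ --- via the universal property of the pull-back and the short five lemma, with Theorem \ref{th1} guaranteeing realizability --- and then invoke the purely formal fact that a nonempty fiber of a group homomorphism is a coset of its kernel, hence a torseur under it. Your route makes explicit two points the paper leaves tacit: that being a co-prolongation depends only on the equivalence class in $Opext(G,A,\varphi)$, and that translating a co-prolongation class by an element of $K$ again yields a co-prolongation class (i.e., that $\Lambda(\overline h)$ really is a transformation of $U$ rather than of all of $H^2_\varphi(G,A)$). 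What the paper's version buys in exchange is that it never needs the pull-back construction or the identification of a co-prolongation with an equivalence $E_0\cong\gamma^{*}E$; it argues entirely with the representatives already built in the proof of Theorem \ref{th1}. Both arguments are sound; yours is the more structural and arguably the more complete write-up of the same idea.
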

\begin{proof} Firstly, observe that each extension
of  $G$ by $A$ inducing
 $\varphi$ is isomorphic to the extension of the crossed product
 $[A,\varphi , f, G]$,
 where $f$ is uniquely determined up to a coboundary
  $\delta t\in B^2_\varphi (G,A).$

Let $U$ be the set of equivalence classes of co-prolongations of the
extension  $E$. To prove that $U$ is a torseur under $K =
\text{Ker}(\overline{\gamma})$, one constructs a map
\begin{align*}\Lambda:\text{Ker}(\overline{\gamma})=K \rightarrow \Aut (U)   \end{align*}
by the formula
\begin{align*} \Lambda(\overline{h})(cls[A,\varphi, f, G]) = cls[A,\varphi f, h, G]. \end{align*}

Thanks to the above observation, this formula is not dependent on
the representative element of the class  $\overline{h}$, as well as
the representative   $f$. Thus, $\Lambda$ is well defined. Further,
$\Lambda(\overline{h})$ is actually an element of the group of
transformations of $U$. Clearly, $\Lambda$ is a group homomorphism.

It remains to prove that for any two co-prolongations   $E_1, E_2$
of the extension $E$, there uniquely exists  an element
$\overline{h}\in \text{Ker}(\overline{\gamma})$ such that
\begin{align*}cls E_2= \Lambda(\overline{h}) (cls E_1). \end{align*}
Indeed, one has $cls E_1 = cls[G,\varphi, f_i, A], i=1,2,$ where
$\overline{\gamma f_i}=\overline{f}.$ By setting $h= - f_1+ f_2$,
the proof is completed.
\end{proof}

\vskip 0.4 true cm

 {\bf Nguyen Tien Quang}. Department of Mathematics, Hanoi National University of
Education, Vietnam\\
 Email: cn.nguyenquang@gmail.com

 {\bf Doan Trong Tuyen}.  Faculty of  Economics Mathematics, National Economics  University, Hanoi, Vietnam\\
 Email: doantrongtuyen@gmail.com

{\bf Nguyen Thi Thu Thuy}.  School of Applied Mathematics and Informatics, Hanoi University of Science and Technology, Hanoi, Vietnam\\
Email: thuthuyfa@gmail.com
\end{document}